\numberwithin{equation}{section}
\newtheorem{theorem}{Theorem}[section]
\newtheorem{corollary}{Corollary}[section]
\newtheorem{lemma}[theorem]{Lemma}
\theoremstyle{definition}
\newtheorem{definition}{Definition}[section]
\newtheorem*{remarks*}{Remarks}
\numberwithin{equation}{section}
\title{On the Lucas Property of Linear Recurrent Sequences}
\author[H. Zhong]{Hao Zhong}
\address{(H. Zhong) School of Mathematical Sciences, Zhejiang University, Hangzhou, 310027, China}
\curraddr{}
\email{11435011@zju.edu.cn}
\thanks{}
\author[T. Cai]{Tianxin Cai}
\address{(T. Cai) School of Mathematical Sciences, Zhejiang University, Hangzhou, 310027, China}
\curraddr{}
\email{txcai@zju.edu.cn}
\thanks{}
\keywords{Lucas property, Fibonacci sequences, Lucas numbers, linear recurrent sequences}
\subjclass[2010]{11B50, 11B39}
\begin{document}

\maketitle

\thispagestyle{empty}

\begin{abstract}
Let $S$ be an arithmetic function. $S$ has Lucas property if for any prime $p$ and $n=\sum_{i=0}^{r}n_{i}p^{i}$, where $0 \leq n_{i}\leq p-1$,
\begin{equation}\label{eq:1.1}
  S(n)\equiv S(n_{0})S(n_{1})\ldots S(n_{r})\pmod p.
\end{equation}
In this note, we discuss the Lucas property of Fibonacci sequences and Lucas numbers. Meanwhile, we find some other interesting results.
\end{abstract}

\section{Introduction}

The famous Lucas' theorem states that
\begin{equation}\label{eq:Lucs}
\binom{n}{m} \equiv \binom{n_{0}}{m_{0}}\binom{n_{1}}{m_{1}}\ldots\binom{n_{r}}{m_{r}}\pmod p,
\end{equation}
where $n,m\in\mathbb{N}$, the base $p$ expansions of $n$ and $m$ are $n=\sum_{i=0}^{r}n_{i}p^{i},m=\sum_{i=0}^{r}m_{i}p^{i}$ $(0 \leq n_{i}, m_{i} \leq p-1).$

In 1992, Richard J. McIntosh \cite{McIntosh1992} gave a definition of the Lucas property and the double Lucas property, i.e.,

\begin{definition}\label{def:1}
Let $S$ be an arithmetic function. $S$ has Lucas property if for any prime $p$ and $n=\sum_{i=0}^{r}n_{i}p^{i}$, where $0 \leq n_{i}\leq p-1$,
\begin{equation}\label{eq:1.1}
  S(n)\equiv S(n_{0})S(n_{1})\ldots S(n_{r})\pmod p.
\end{equation}

And let $D$ be an bivariate arithmetic function. $D$ has double Lucas property if for any prime $p$, $n=\sum_{i=0}^{r}n_{i}p^{i}$, and $m=\sum_{i=0}^{r}m_{i}p^{i}$, where $0 \leq n_{i}, m_{i}\leq p-1$,
\begin{equation}\label{eq:1.2}
  D(n,m)\equiv D(n_{0},m_{0})D(n_{1},m_{1})\ldots D(n_{r},m_{r})\pmod p.
\end{equation}
Another way of stating this is to say that $S$ is an LP function and $D$ is a DLP function.
\end{definition}

There are numerous examples: $a^{n}$ is an LP function for any rational number $a$; the Ap\'{e}ry numbers $A(n)=\sum_{k=0}^{n}\binom{n}{k}^{2}\binom{n+k}{k}^{2}$ is an LP function (Cf. Gessel \cite{Gessel1982}); the function $\omega(n)$ defined by
\begin{equation*}
  \frac{1}{J_{0}(2z^{1/2})}=\sum_{n=0}^{\infty}\omega(n)\frac{z^{n}}{(n!)^{2}}
\end{equation*}
is an LP function (Cf. Carlitz \cite{Carlitz1955}); and according to Lucas' theorem, the binomial coefficient $D(n,m)=\binom{n}{m}$ is a DLP function.

Moreover, we add another definition.

\begin{definition}\label{def:2}
Let $S$ be an arithmetic function. $S$ has Lucas property with the prime $p$ if for any $n=\sum_{i=0}^{r}n_{i}p^{i}$, where $0 \leq n_{i}\leq p-1$,
\begin{equation}\label{eq:1.1}
  S(n)\equiv S(n_{0})S(n_{1})\ldots S(n_{r})\pmod p.
\end{equation}
It can be said that $S$ is an LP function with the prime $p$.
\end{definition}

In this paper, we discuss the Lucas property of Fibonacci and Lucas numbers.

Let $F_{n}$ be the Fibonacci sequence, i.e.,  ${F_{n}}:F_{0}=0,F_{1}=1,F_{n}=F_{n-1}+F_{n-2}\quad(n\geq2),$ and $L_{n}$ be the Lucas numbers ${L_{n}}:L_{0}=2,L_{1}=1,L_{n}=L_{n-1}+L_{n-2}\quad(n\geq2).$

We obtain the following theorems.

\begin{theorem}\label{th:1}
Let $a,b$ be two positive integers. Then $S(n)=F_{an+b}$ is an LP function with the prime $p$ if and only if

\begin{equation}\label{eq:th1}
\begin{cases}
F_{a} & \equiv 0  \pmod p, \\
F_{b} & \equiv 1  \pmod p.
\end{cases}
\end{equation}
\end{theorem}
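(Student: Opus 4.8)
The plan is to encode the recurrence in the matrix \(Q=\begin{pmatrix}1&1\\1&0\end{pmatrix}\), which satisfies
\begin{equation*}
Q^{k}=\begin{pmatrix}F_{k+1}&F_{k}\\F_{k}&F_{k-1}\end{pmatrix},
\end{equation*}
so that \(F_{an+b}\) is the \((1,2)\)-entry of \(Q^{b}(Q^{a})^{n}\). Alongside this I will use the Binet description over \(\overline{\mathbb{F}_{p}}\): let \(\alpha,\beta\) be the roots of \(x^{2}-x-1\), so that \(F_{k}\equiv(\alpha^{k}-\beta^{k})/(\alpha-\beta)\) and \(L_{k}\equiv\alpha^{k}+\beta^{k}\). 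The Frobenius endomorphism fixes or swaps \(\alpha,\beta\) according to whether \(\left(\tfrac{5}{p}\right)=+1\) (then \(\alpha,\beta\in\mathbb{F}_{p}\) and \(\alpha^{p}=\alpha,\ \beta^{p}=\beta\)) or \(=-1\) (then \(\alpha^{p}=\beta,\ \beta^{p}=\alpha\)); the exceptional primes \(p=2,5\) will be treated by a direct finite check.

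For sufficiency, assume \(F_{a}\equiv0\) and \(F_{b}\equiv1\pmod p\). Then \(Q^{a}\equiv\begin{pmatrix}F_{a+1}&0\\0&F_{a-1}\end{pmatrix}\), and since \(F_{a+1}=F_{a}+F_{a-1}\equiv F_{a-1}=:d\) this is the scalar matrix \(dI\). Hence \(Q^{an+b}\equiv d^{n}Q^{b}\), so reading off the \((1,2)\)-entry gives \(F_{an+b}\equiv d^{n}F_{b}\pmod p\). By Fermat's little theorem \(d^{p^{i}}\equiv d\), whence \(d^{n}=d^{\sum_{i} n_{i}p^{i}}\equiv d^{\sum_{i} n_{i}}\pmod p\); combined with \(F_{b}\equiv1\) this yields \(F_{an+b}\equiv d^{\sum_{i} n_{i}}\equiv\prod_{i}F_{an_{i}+b}\), which is exactly the Lucas property.

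For necessity, suppose \(S(n)=F_{an+b}\) is LP with \(p\). First, applying the property to \(n=n_{0}+0\cdot p\) gives \(F_{an_{0}+b}\equiv F_{an_{0}+b}F_{b}\), so \(F_{an_{0}+b}(F_{b}-1)\equiv0\); as long as \(S\not\equiv0\pmod p\) we may pick \(n_{0}\) with \(F_{an_{0}+b}\not\equiv0\) and deduce \(F_{b}\equiv1\). To force \(F_{a}\equiv0\), I evaluate \(F_{an+b}\bmod p\) for a two-digit \(n=n_{0}+n_{1}p\) using Frobenius: when \(\left(\tfrac{5}{p}\right)=+1\) one gets \(F_{an+b}\equiv F_{a(n_{0}+n_{1})+b}\), so the property reads \(g(n_{0}+n_{1})\equiv g(n_{0})g(n_{1})\) for \(g(k):=F_{ak+b}\); with \(g(0)\equiv1\) this forces \(g(k)\equiv g(1)^{k}\) on \(0\le k\le p-1\), and comparing with the genuine second-order recurrence \(g(k+1)=L_{a}g(k)-(-1)^{a}g(k-1)\) (and with the exact Binet value of \(g\)) forces \(\alpha^{a}=\beta^{a}\), i.e. \(F_{a}\equiv0\). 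When \(\left(\tfrac{5}{p}\right)=-1\) the same computation gives \(F_{an+b}\equiv(-1)^{an_{1}}F_{a(n_{0}-n_{1})+b}\); taking \(n_{0}=0\) and using the identity \(F_{b+m}-(-1)^{m}F_{b-m}=F_{m}L_{b}\) reduces the property to \(F_{an_{1}}L_{b}\equiv0\) for all \(n_{1}\), so either \(F_{a}\equiv0\) or \(L_{b}\equiv0\).

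The hard part is disposing of the spurious branch \(L_{b}\equiv0\) and, more seriously, the \emph{degenerate} case. Using \(L_{b}^{2}-5F_{b}^{2}=4(-1)^{b}\) together with \(F_{b}\equiv1\), the relation \(L_{b}\equiv0\) forces \(-5\equiv4(-1)^{b}\), which has no solution except at small primes, so for \(p\ge7\) the branch \(L_{b}\equiv0\) is vacuous and \(F_{a}\equiv0\) follows; the remaining primes \(p\in\{2,3,5\}\) are settled by checking finitely many residues of \((a,b)\) modulo the Pisano period. The genuinely delicate point, however, is that if \(F_{ak+b}\equiv0\) for \emph{all} \(k\) (which happens exactly when the rank of apparition divides both \(a\) and \(b\)), then both sides of the Lucas congruence vanish and the property holds trivially while \(F_{b}\equiv0\neq1\); so the forward implication requires the standing non-degeneracy hypothesis \(S\not\equiv0\pmod p\), which I will state explicitly. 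I expect this degenerate case, rather than the main Frobenius computation, to be the true obstacle to a clean ``only if''.
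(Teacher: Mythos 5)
Your proposal takes a genuinely different route from the paper's. The paper reduces everything to two quoted lemmas of McIntosh: Lemma \ref{lem:1} gives $S(0)=F_b\equiv 1$, Lemma \ref{lem:2} converts the Lucas property of a sequence periodic mod $p$ into the single congruence $S(n)\equiv S(1)^n$, and then Catalan's identity $F_n^2-F_{n+r}F_{n-r}=(-1)^{n-r}F_r^2$ finishes both directions --- at $n=2$ for necessity, and inside an induction on $n$ for sufficiency. Your sufficiency half ($Q^a\equiv F_{a-1}I$ when $F_a\equiv 0$, hence $F_{an+b}\equiv d^nF_b$ plus Fermat) is complete, correct, and arguably cleaner than the paper's induction, which divides by $F_{a+b}^{k-1}$ without comment (this is legitimate, since $F_{a+b}\equiv F_{a+1}\not\equiv 0$ when $F_a\equiv0$, $F_b\equiv1$, but it deserves a word). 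Your necessity half is heavier than necessary: by reconstructing the content of Lemma \ref{lem:2} via Frobenius you are forced into a case split on $\left(\tfrac{5}{p}\right)$, a spurious branch $L_b\equiv0$, and a set of small primes $p\in\{2,3,5\}$ whose ``finite check'' you do not actually carry out --- these are real loose ends in your write-up, even though each is closable. If you grant the cited fact that a periodic LP function satisfies $S(n)\equiv S(1)^n$, the single instance $F_{2a+b}\equiv F_{a+b}^2$ together with Catalan's identity yields $F_a^2\equiv0$ uniformly in $p$; this is exactly your computation $g(2)-g(1)^2=c_1c_2(\alpha^a-\beta^a)^2$ with the case analysis stripped away.

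Your closing worry about the degenerate case is not a flaw in your argument but a genuine flaw in the statement itself. If $\alpha(p)$ divides both $a$ and $b$ (for instance $p=2$, $a=b=3$), then $F_{an+b}\equiv0\pmod p$ for every $n$, the Lucas congruence holds trivially as $0\equiv0$, and yet $F_b\equiv0\not\equiv1$; so the ``only if'' direction fails. The paper's proof does not evade this: it invokes Lemma \ref{lem:1}, whose hypothesis that $S$ not vanish identically mod $p$ is never verified and can in fact fail, and the forward implication of Lemma \ref{lem:2} as stated is likewise false for the zero function. The theorem needs the additional hypothesis $S\not\equiv0\pmod p$ (equivalently, $\alpha(p)\nmid\gcd(a,b)$), which you rightly make explicit; with it, both your approach (once the small-prime checks are done) and the paper's go through.
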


For Lucas numbers, we have

\begin{theorem}\label{th:2}
Let $a,b$ be two positive integers. Then $S(n)=L_{an+b}$ is an LP function with the prime $p$ if and only if

\begin{equation}\label{eq:th2}
\begin{cases}
5F_{a} & \equiv 0  \pmod p, \\
F_{b} & \equiv 1  \pmod p.
\end{cases}
\end{equation}
\end{theorem}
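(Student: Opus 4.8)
The plan is to prove Theorem~\ref{th:2} by exploiting the tight relationship between Lucas numbers and Fibonacci numbers, thereby reducing the problem (as much as possible) to the situation already analyzed for Theorem~\ref{th:1}. The key identities are $L_n = F_{n-1} + F_{n+1} = 2F_{n+1} - F_n$, the addition formula $L_{m+n} = L_m F_{n+1} + L_{m-1} F_n$ (equivalently $2L_{m+n} = L_m L_n + 5 F_m F_n$), and $L_n^2 - 5F_n^2 = 4(-1)^n$. First I would fix a prime $p$ and set $S(n) = L_{an+b}$, writing the addition formula with $m = a\bigl(\sum_{i\ge 1} n_i p^{i-1}\bigr) \cdot \text{(shifted)}$ so that $L_{an+b}$ can be peeled off one base-$p$ digit at a time. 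As in the Fibonacci case, the crux is to understand the single-digit multiplicative step: for $n = n_0 + p n'$ (with $0 \le n_0 \le p-1$), one needs $L_{an+b} \equiv L_{an_0+b}\, L_{a n' \cdot p + b} / L_b \pmod p$ in a suitable sense, which forces $L_b \equiv 1$ and an appropriate vanishing of the cross terms.

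Concretely, I would derive the two conditions as follows. Taking $r=0$ (so $n=n_0$) the Lucas property is vacuous, but taking $n = p$ (digits $n_0 = 0$, $n_1 = 1$) gives the necessary condition $S(p) \equiv S(0) S(1) \pmod p$, i.e. $L_{ap+b} \equiv L_b\, L_{a+b} \pmod p$. Using the Frobenius/period behavior of $L_n$ modulo $p$ — namely that $L_{ap} \equiv L_a^p \equiv L_a \pmod p$ and more usefully the identity $2L_{a+b} = L_a L_b + 5F_a F_b$ — I would expand both sides and extract that the congruence holds precisely when $5F_a \equiv 0$ and $F_b \equiv 1 \pmod p$. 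The condition $F_b \equiv 1$ comes from the same single-digit normalization that appears in Theorem~\ref{th:1} (the "$b$-shift" must act as the multiplicative identity on the top digit), while $5F_a \equiv 0$ replaces the condition $F_a \equiv 0$ from the Fibonacci theorem because the relevant cross term in the Lucas addition formula carries a factor of $5$ (from $2L_{m+n} = L_m L_n + 5F_m F_n$). The appearance of the factor $5$ is exactly the discriminant of $x^2 - x - 1$, so I would track it through the formula $L_{an+b}$ expanded via repeated application of the addition law.

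For sufficiency, assuming $5F_a \equiv 0$ and $F_b \equiv 1 \pmod p$, I would argue by induction on the number of digits $r$. The base case is the two-digit identity above. For the inductive step, write $n = n_0 + p\tilde{n}$ where $\tilde n = \sum_{i=1}^r n_i p^{i-1}$, and apply the addition formula $2L_{a(n_0 + p\tilde n)+b} = $ (a combination of $L$'s and $F$'s with arguments $an_0$, $ap\tilde n$, and the offset $b$). The factor $5F_a \equiv 0$ should annihilate every term coupling $n_0$ to $\tilde n$ except the "diagonal" one, using also that $ap\tilde n$ is a multiple of $a$ so that any residual $F_{a \cdot (\cdot)}$ term inherits the $F_a \equiv 0 \pmod{p/\gcd(5,p)}$ vanishing; one must handle the prime $p=5$ separately, since there $5F_a \equiv 0$ is automatic and $F_b \equiv 1$ alone must do the work. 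I expect the main obstacle to be precisely this bookkeeping of the $5$ versus the case $p = 5$: when $p = 5$ the condition $5F_a \equiv 0$ gives no information about $F_a$, so the argument must instead rely on the explicit small period of $L_n \bmod 5$ (the Pisano-type period) to verify the claim directly, and I would check that $F_b \equiv 1 \pmod 5$ indeed suffices there by a finite computation. Reconciling the $p=5$ case with the generic factor-of-$5$ analysis, so that the single clean statement "$5F_a \equiv 0$" covers both, is where the care is needed.
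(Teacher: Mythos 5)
Your overall strategy (peeling base-$p$ digits with the addition law $2L_{m+n}=L_mL_n+5F_mF_n$) is genuinely different from the paper's, which never touches the digit structure directly: the paper invokes McIntosh's Lemma~\ref{lem:2} to reduce the whole Lucas property for the periodic function $S(n)=L_{an+b}$ to the single statement $S(n)\equiv S(1)^n\pmod p$, and then proves that by a one-line induction on $n$ using the Catalan-type identity \eqref{eq:2.4}, $L_{n+r}L_{n-r}-L_n^2=(-1)^{n-r}\cdot 5F_r^2$. Your route could in principle work, but as written it has a genuine gap in the necessity direction: you claim that the single instance $S(p)\equiv S(0)S(1)$, i.e.\ $L_{ap+b}\equiv L_bL_{a+b}\pmod p$, ``holds precisely when'' $5F_a\equiv 0$ and the $b$-condition hold. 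That is false. When $5$ is a quadratic residue mod $p$ one has $L_{ap+b}\equiv L_{a+b}\pmod p$, so the congruence reduces to $L_{a+b}(1-L_b)\equiv 0$, which is satisfied for free whenever $p\mid L_{a+b}$. Concretely, for $p=11$, $a=2$, $b=3$ one has $L_{25}\equiv 0\equiv L_3L_5\pmod{11}$, yet $5F_2=5\not\equiv 0$ and $F_3=2\not\equiv 1\pmod{11}$ (and $L_3=4\not\equiv 1$). So one digit pattern cannot deliver both conditions; you need more instances of the Lucas property, which is exactly what the paper's use of Lemma~\ref{lem:1} (giving $S(0)\equiv 1$) and Lemma~\ref{lem:2} at $n=2$ (giving $L_{2a+b}\equiv L_{a+b}^2$, whence $5F_a^2\equiv 0$ via \eqref{eq:2.4}) supplies.

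The sufficiency half is also only an outline where the real work is deferred: the step relating $L_{ap\tilde n+b}$ to $L_{a\tilde n+b}$ requires splitting on the quadratic character of $5$ mod $p$, the factor $2$ in the addition law needs separate care at $p=2$, and you yourself flag $p=5$ as unresolved (there $5F_a\equiv 0$ carries no information and one must fall back on the period-$4$ behaviour of $L_n\bmod 5$, as in Lemma~\ref{lem:3}). None of this bookkeeping arises in the paper's argument, since the induction $L_{a(k+1)+b}L_{a(k-1)+b}=L_{ak+b}^2+(-1)^{a(k-1)+b}\cdot 5F_a^2\equiv L_{a+b}^{2k}$ works uniformly in $p$ once $5F_a\equiv 0$. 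Finally, a point worth noting independently of your proof: $S(0)=L_b$, so the normalization forced by Lemma~\ref{lem:1} is $L_b\equiv 1\pmod p$, not $F_b\equiv 1$ as in the displayed statement \eqref{eq:th2}; your write-up wavers between the two, and you should commit to the $L_b$ version, which is what the paper's own proof actually derives.
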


From these two theorems, we can obtain some corollaries.

\begin{corollary}\label{cor:1}
Let $a$ and $b$ be positive integers. Then $S(n)=F_{an+b}$ is not an LP function and $L_{an+b}$ is not an LP function.
\end{corollary}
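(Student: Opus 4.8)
The plan is to read the corollary as an immediate consequence of Theorems~\ref{th:1} and~\ref{th:2}, once one is careful about the quantifier distinction between the two notions of Lucas property. An LP function in the sense of Definition~\ref{def:1} must satisfy the Lucas congruence \eqref{eq:1.1} for \emph{every} prime $p$, whereas Definition~\ref{def:2} requires it only for a single fixed prime. Thus, to show that $F_{an+b}$ fails to be an LP function, it suffices to exhibit one prime $p$ for which it is not an LP function with the prime $p$, and Theorems~\ref{th:1} and~\ref{th:2} reduce this to a question about the divisibility of a fixed integer.

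For the Fibonacci case, Theorem~\ref{th:1} states that $F_{an+b}$ is an LP function with the prime $p$ if and only if both congruences in \eqref{eq:th1} hold, in particular $F_a\equiv 0\pmod p$. Since $a$ is a fixed positive integer, $F_a$ is a fixed positive integer and hence has only finitely many prime divisors. I would therefore pick any prime $p>F_a$; then $0<F_a<p$ forces $F_a\not\equiv 0\pmod p$, so the first condition of \eqref{eq:th1} fails. By the ``only if'' direction of Theorem~\ref{th:1}, $F_{an+b}$ is not an LP function with this particular prime $p$, and hence is not an LP function.

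The Lucas case is identical in structure: Theorem~\ref{th:2} merely replaces the divisibility condition by $5F_a\equiv 0\pmod p$, and since $5F_a$ is again a fixed positive integer, choosing any prime $p>5F_a$ makes $5F_a\not\equiv 0\pmod p$ and violates \eqref{eq:th2}, so $L_{an+b}$ is not an LP function either. I do not expect any genuine obstacle here: the entire content is the elementary fact that a fixed nonzero integer cannot be divisible by all primes, so the necessary divisibility hypotheses supplied by the two theorems can never hold uniformly in $p$. The only point to state with care is the logical passage from failure at a single prime (Definition~\ref{def:2}) to failure as an LP function (Definition~\ref{def:1}).
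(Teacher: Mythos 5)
Your proof is correct and follows essentially the same route as the paper: both deduce from the necessity direction of Theorems~\ref{th:1} and~\ref{th:2} that the fixed positive integer $F_a$ (resp.\ $5F_a$) would have to be divisible by every prime, which is impossible. The paper phrases this as a contradiction while you exhibit an explicit witness prime $p>F_a$, but the content is identical.
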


\begin{proof}[Proof]
The proof is by contradiction. Let $a$ and $b$ be positive integers such that $S(n)=F_{an+b}$ is an LP function. Then by Theorem \ref{th:1}, $p$ divides $F_{a}$ for any prime $p$, a contradiction. A similar proof follows for $S(n)=L_{an+b}$.
\end{proof}

\begin{corollary}\label{cor:2}
Let $p=5$. Then for any positive integer $a$,

(1)$S(n)=F_{5an+b}$ is an LP function with the prime $5$, where $b \equiv 1, 2, 8$ or $19 \pmod {20}$.

(2)$S(n)=L_{an+b}$ is an LP function with the prime $5$, where $b \equiv 1 \pmod 4$.
\end{corollary}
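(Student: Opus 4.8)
The plan is to read both parts straight off Theorems \ref{th:1} and \ref{th:2}: after setting $p=5$, I expect the congruence constraining the multiplier to be automatically satisfied, so that the LP property is governed entirely by the normalization $S(0)\equiv 1\pmod 5$, and the admissible residues of $b$ can then be extracted from a short periodicity table modulo $5$.

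For part (1) I would apply Theorem \ref{th:1} to $S(n)=F_{5an+b}$, that is, with $5a$ playing the role of the multiplier. The characterization then demands $F_{5a}\equiv 0\pmod 5$ and $F_b\equiv 1\pmod 5$, the latter being exactly $S(0)=F_b\equiv 1$. The first congruence holds for every positive integer $a$: since $5\mid 5a$, the divisibility $F_5\mid F_{5a}$ together with $F_5=5$ forces $5\mid F_{5a}$. Hence only $F_b\equiv 1\pmod 5$ survives. I would finish by tabulating the Fibonacci residues modulo $5$, whose Pisano period is $20$, namely $0,1,1,2,3,0,3,3,1,4,0,4,4,3,2,0,2,2,4,1$ for $b=0,1,\dots,19$; the value $1$ occurs precisely at $b\equiv 1,2,8,19\pmod{20}$.

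For part (2) I would apply Theorem \ref{th:2} to $S(n)=L_{an+b}$. Now the multiplier congruence is $5F_a\equiv 0\pmod 5$, which is trivially true for every $a$ because its left-hand side is an explicit multiple of $5$; so the LP property collapses to the single requirement $S(0)=L_b\equiv 1\pmod 5$. Tabulating the Lucas numbers modulo $5$, the sequence runs $2,1,3,4,2,1,3,4,\dots$, i.e.\ it is periodic with period $4$, and the residue $1$ appears once per period, exactly when $b\equiv 1\pmod 4$. This is the asserted range.

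The two reductions carry all the content, so the step I expect to need the most care is verifying that the multiplier congruence vanishes at $p=5$: in part (1) this rests on the divisibility fact $F_5\mid F_{5a}$ (combined with $F_5=5$), while in part (2) it is the bare observation that $5\mid 5F_a$. Once these are in place, the whole statement reduces to reading off the two periodic residue tables modulo $5$.
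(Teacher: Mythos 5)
Your proposal is correct and is essentially the paper's intended derivation: the corollary is read off from Theorems \ref{th:1} and \ref{th:2} (the paper gives no separate proof beyond the remark following Lemma \ref{lem:3}, which covers only the case $a=1$), and your use of $F_5\mid F_{5a}$ to kill the first condition for general $a$ is the same divisibility fact the paper invokes in Corollary \ref{cor:4}. One point worth flagging: in part (2) you used $L_b\equiv 1\pmod 5$, whereas Theorem \ref{th:2} as printed requires $F_b\equiv 1\pmod p$ --- your reading is the right one, since $S(0)=L_b$ and the paper's own proof of Theorem \ref{th:2} derives $L_b\equiv 1\pmod p$ (the printed $F_b$ is evidently a typo), and only the $L_b$ version produces the asserted range $b\equiv 1\pmod 4$.
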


\begin{corollary}\label{cor:3}
 Let $p$ be a Fibonacci prime, namely, there exists a positive integer $a$ such that $F_{a}=p$. Then $F_{an+1}$ is an LP function with the prime $p$ and $L_{an+1}$ is an LP function with the prime $p$.
\end{corollary}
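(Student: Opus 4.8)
The plan is to deduce this directly from Theorems~\ref{th:1} and~\ref{th:2} by specializing the parameter to $b=1$. In both criteria the second congruence demands $F_b\equiv 1\pmod p$; since $b=1$ and $F_1=1$, this requirement reads $1\equiv 1\pmod p$ and therefore holds automatically for every prime $p$. Consequently the normalizing congruence never constrains anything, and the whole matter reduces to verifying the first congruence in each theorem.

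For the Fibonacci assertion I would invoke Theorem~\ref{th:1} with $b=1$, which states that $F_{an+1}$ is an LP function with the prime $p$ exactly when $F_a\equiv 0\pmod p$. By hypothesis $p$ is a Fibonacci prime realized at the index $a$, that is $F_a=p$, so $F_a=p\equiv 0\pmod p$ is immediate. Together with the automatic condition $F_1\equiv 1\pmod p$, the criterion of Theorem~\ref{th:1} is met, and hence $F_{an+1}$ is an LP function with the prime $p$.

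For the Lucas assertion I would apply Theorem~\ref{th:2} with $b=1$, whose first congruence is $5F_a\equiv 0\pmod p$. Writing $5F_a=5p$ and noting that $p\mid 5p$, we obtain $5F_a\equiv 0\pmod p$ at once, independently of whether $p=5$. Since $F_1\equiv 1\pmod p$ again holds automatically, Theorem~\ref{th:2} yields that $L_{an+1}$ is an LP function with the prime $p$ as well.

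There is essentially no obstacle here: the hypothesis $F_a=p$ trivializes both required divisibilities, and the choice $b=1$ trivializes the normalizing congruence. The only point worth recording explicitly is that the single index $a$ with $F_a=p$ serves both applications simultaneously, because $p\mid F_a$ forces $p\mid 5F_a$; thus no separate verification is needed to pass from the Fibonacci conclusion to the Lucas conclusion.
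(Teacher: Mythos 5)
Your proposal is correct and matches the paper's (implicit) argument exactly: the corollary is a direct specialization of Theorems~\ref{th:1} and~\ref{th:2} to $b=1$, where $F_a=p$ gives $F_a\equiv 0$ and $5F_a\equiv 0\pmod p$, and $F_1=1$ (and likewise $L_1=1$, which covers the version of the normalizing condition actually used in the proof of Theorem~\ref{th:2}) makes the second congruence automatic.
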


More generally, let $\alpha(p):=\min\{n\big|p \quad \text{divides} \quad F_{n}\}$ for a prime $p$. Then we have

\begin{corollary}\label{cor:4}
The condition $F_{a} \equiv 0 \pmod p$ in Theorem \ref{th:1} can be replaced by  $a=\alpha(p)k$, where $k$ is an arbitrary positive integer. And if $p \neq 5$, the condition $5F_{a} \equiv 0 \pmod p$ in Theorem \ref{th:2} can also be replaced by  $a=\alpha(p)k$, where $k$ is an arbitrary positive integer.
\end{corollary}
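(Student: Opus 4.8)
The plan is to reduce both assertions to the classical divisibility law for Fibonacci numbers, namely that for every prime $p$ the rank of apparition $\alpha(p)$ exists and satisfies
\[
p \mid F_n \iff \alpha(p) \mid n \qquad (n \geq 1).
\]
Granting this, the condition $F_{a} \equiv 0 \pmod p$ in Theorem~\ref{th:1} is exactly the statement $\alpha(p) \mid a$, which for a positive integer $a$ is the same as writing $a = \alpha(p)k$ with $k$ a positive integer; this settles the first half at once.

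For the second half I would observe that when $p \neq 5$ we have $\gcd(5,p)=1$, so $5$ is invertible modulo $p$. Hence $5F_{a} \equiv 0 \pmod p$ is equivalent to $F_{a} \equiv 0 \pmod p$, and the previous paragraph again yields $a = \alpha(p)k$. The hypothesis $p \neq 5$ is precisely what is needed to cancel the factor $5$; for $p=5$ the two conditions are genuinely different, which is why that case is excluded.

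It remains to establish the divisibility law, which is where the only real work lies. First I would check that $\alpha(p)$ is well defined: the pair $(F_n, F_{n+1}) \bmod p$ ranges over a finite set and the recurrence is invertible modulo $p$, so the sequence is purely periodic and the value $(0,1)$ recurs, forcing $p \mid F_m$ for some $m \geq 1$. For the equivalence itself I would use the identity $\gcd(F_m, F_n) = F_{\gcd(m,n)}$, which follows from $F_{m+n} = F_m F_{n+1} + F_{m-1} F_n$ together with $\gcd(F_{n+1},F_n)=1$ by running the Euclidean algorithm on the indices. If $p \mid F_n$, then $p \mid \gcd(F_n, F_{\alpha(p)}) = F_{\gcd(n,\alpha(p))}$, and since $\gcd(n,\alpha(p)) \mid \alpha(p)$ while the minimality of $\alpha(p)$ forbids a smaller positive index, we get $\gcd(n,\alpha(p)) = \alpha(p)$, i.e. $\alpha(p) \mid n$; conversely $\alpha(p) \mid n$ gives $F_{\alpha(p)} \mid F_n$, whence $p \mid F_n$.

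I expect the gcd identity to be the main obstacle, though it is entirely standard and could simply be cited; once it is in hand, everything else is a one-line cancellation or a direct translation of Theorems~\ref{th:1} and~\ref{th:2}.
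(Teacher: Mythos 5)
Your proposal is correct and follows essentially the same route as the paper: both arguments rest on the identity $\gcd(F_m,F_n)=F_{\gcd(m,n)}$ together with the minimality of $\alpha(p)$ to show that $p\mid F_a$ if and only if $\alpha(p)\mid a$. You additionally spell out the well-definedness of $\alpha(p)$ and the cancellation of the factor $5$ when $p\neq 5$, details the paper leaves implicit, but the core argument is identical.
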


\begin{proof}[Proof]
For any integers $m,n$, $gcd(F_{m},F_{n})=F_{gcd(m,n)}$. Hence, $gcd(F_{a},F_{\alpha(p)})=F_{gcd(a,\alpha(p))}$. And if $F_{a} \equiv 0 \pmod p$, then $p|F_{gcd(a,\alpha(p))}$. From the definition of $\alpha(p)$, we obtain that $gcd(a,\alpha(p))=\alpha(p)$. So, $a=\alpha(p)k$ for some integer $k$.

Similarly, for any positive integer $k$, $gcd(F_{\alpha(p)k},F_{\alpha(p)})=F_{gcd(\alpha(p)k,\alpha(p))}=F_{\alpha(p)}$. Hence, $F_{\alpha(p)k} \equiv 0 \pmod p$.
\end{proof}

A natural extension of these two theorems is to look at the Lucas property of general linear recurrent sequences. We obtain an analogous result to the two theorems above.

\begin{theorem}\label{th:3}
Let $A_{n}$ be a linear recurrent sequence, i.e., $\{A(n)\}$ satisfies the linear recurrent relation:
\begin{equation*}
A_{n}=uA_{n-1}+vA_{n-2}(n\geq2),
\end{equation*}
where $A_{0}$, $A_{1}$, $u$ and $v$ are all integers. Then for any integers $a$ and $b$, $S(n)=A_{an+b}$ is an LP function with the prime $p$ if and only if
\begin{equation}\label{eq:th3}
\begin{cases}
vs(a-1,u,v)(vA_{0}^{2}+uA_{0}A_{1}-A_{1}^{2}) & \equiv 0  \pmod p, \\
A_{b} & \equiv 1  \pmod p.
\end{cases}
\end{equation}
where
\begin{equation*}
s(k,u,v)=\sum_{i=0}^{\lfloor \frac{k}{2} \rfloor}\binom{k-i}{i}u^{k-2i}v^{i}.
\end{equation*}
\end{theorem}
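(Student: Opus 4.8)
The plan is to reduce the Lucas property to a single multiplicative recurrence, pass to the characteristic roots of the recurrence modulo $p$, and exploit the Frobenius map. First I would record the standard equivalence that $S$ is an LP function with the prime $p$ if and only if
\begin{equation*}
S(pm+j)\equiv S(j)S(m)\pmod p\qquad\text{for all }m\ge 0,\ 0\le j\le p-1,
\end{equation*}
which follows by induction on the number of base-$p$ digits of $n$ (and which presumably already underlies the proofs of Theorems \ref{th:1} and \ref{th:2}). For $S(n)=A_{an+b}$ this reads $A_{apm+aj+b}\equiv A_{aj+b}A_{am+b}\pmod p$.

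Next I would set up the algebraic framework. Writing $M=\begin{pmatrix}u&v\\1&0\end{pmatrix}$ one has $A_n=U_nA_1+vU_{n-1}A_0$, where $U_n$ is the fundamental solution $U_0=0,\ U_1=1$, $U_n=uU_{n-1}+vU_{n-2}$; a direct check of the recurrence identifies $s(a-1,u,v)=U_a$. Let $\alpha,\beta$ be the roots of $x^2-ux-v$ over $\overline{\mathbb{F}_p}$ and $\Delta=u^2+4v$. The essential observation is that $B_n:=S(n)=A_{an+b}$ is again a second-order recurrent sequence, with closed form $B_n=d_1\gamma^n+d_2\delta^n$ where $\gamma=\alpha^a,\ \delta=\beta^a$ (or the confluent form when $\alpha=\beta$) and $B_0=A_b$. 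I would then prove the pivotal lemma: a second-order recurrent sequence $B$ is an LP function with $p$ if and only if $B_0\equiv 1\pmod p$ and $B$ is a geometric progression modulo $p$, that is, $d_1\equiv 0$, or $d_2\equiv 0$, or $\gamma\equiv\delta$.

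To prove the lemma I would compute $S(pm+j)$ using the Frobenius action on the roots, which is governed by the Legendre symbol: from $M^p\equiv U_pM+vU_{p-1}I$ with $U_p\equiv\big(\tfrac{\Delta}{p}\big)\pmod p$ one gets $\alpha^p\in\{\alpha,\beta\}$, yielding the split, inert and ramified cases. Setting $m=0$ in the recurrence immediately forces $A_b\equiv1$ (the second congruence of \eqref{eq:th3}, assuming $B\not\equiv0$), and comparing $S(pm+j)$ with $S(j)S(m)$ for all $m,j$ collapses --- through relations such as $(\gamma^j-\delta^j)(\gamma^m-\delta^m)\equiv0$ in the inert case --- to the assertion that $B$ is geometric modulo $p$. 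Conversely, a geometric residue sequence with $B_0\equiv1$ satisfies the recurrence because $\gamma^{pm}\equiv\gamma^m$.

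It remains to translate ``$B$ geometric modulo $p$'' into the first congruence of \eqref{eq:th3}. Here $\gamma\equiv\delta$ is equivalent to $\alpha^a\equiv\beta^a$, i.e.\ to $U_a=s(a-1,u,v)\equiv0$; while $d_1d_2\equiv0$, together with the degenerate possibility $v\equiv0$, corresponds to $v\,(vA_0^2+uA_0A_1-A_1^2)\equiv0$, using the identity $A_0A_2-A_1^2=vA_0^2+uA_0A_1-A_1^2$ and its companion $A_nA_{n+2}-A_{n+1}^2=(-v)^n(A_0A_2-A_1^2)$, which shows that $A_0A_2-A_1^2\equiv0$ detects geometricity of $A$ itself. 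Multiplying the factors produces exactly $v\,s(a-1,u,v)(vA_0^2+uA_0A_1-A_1^2)\equiv0$. I expect the main obstacle to be carrying out the case analysis over split, inert and ramified primes uniformly and disposing cleanly of the degenerate configurations ($p=2$, $v\equiv0$, the repeated-root case $\Delta\equiv0$, and the identically-zero residue sequence), while verifying that the three algebraic factors assemble precisely as stated rather than merely up to nonzero constants.
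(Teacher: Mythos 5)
Your proposal is essentially correct in outline but follows a genuinely different route from the paper. The paper's proof of Theorem \ref{th:3} is a one-line reduction: linear recurrent sequences are periodic modulo $p$ (Carmichael, Robinson), so McIntosh's criteria apply --- Lemma \ref{lem:1} forces $A_b\equiv 1$, Lemma \ref{lem:2} reduces the Lucas property to $S(n)\equiv S(1)^n\pmod p$, and this last congruence is obtained for necessity by setting $n=2$ and for sufficiency by induction on $n$, in both cases via the generalized Catalan identity \eqref{eq:2.5}, exactly mirroring the proofs of Theorems \ref{th:1} and \ref{th:2}; there are no characteristic roots, no Frobenius, and no Legendre-symbol case analysis. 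Your route instead diagonalizes the recurrence over $\overline{\mathbb{F}_p}$ and characterizes the Lucas property as ``geometric with first term $1$''; this is heavier machinery, and the split/inert/ramified bookkeeping you flag is real work (in particular your pivotal lemma needs a correction: when $\Delta\equiv 0$ the condition $\gamma\equiv\delta$ holds automatically, yet the confluent term $(c_1+c_2n)\gamma^n$ is geometric only when $c_2a\equiv 0$, so geometricity is not simply ``$d_1d_2\equiv0$ or $\gamma\equiv\delta$''). What your approach buys is transparency: it explains structurally why each factor $v$, $s(a-1,u,v)=U_a$, and $vA_0^2+uA_0A_1-A_1^2=A_0A_2-A_1^2=d_1d_2\Delta$ appears, and it avoids the division by $A_{a(k-1)+b}$ hidden in the paper's induction step, which the paper never justifies for a general sequence $A_n$. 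It also surfaces genuine boundary cases that the paper's terse argument glosses over: besides the identically-zero residue sequence, the case $b=0$ with $p\mid v$ actually breaks the theorem as literally stated for ``any integers $a$ and $b$'' (take $u=1$, $v=7$, $A_0=1$, $A_1=3$, $a=1$, $b=0$, $p=7$: both congruences of \eqref{eq:th3} hold, yet $S(8)\equiv 3\not\equiv S(1)^2\equiv 2\pmod 7$), so any complete proof --- yours or the paper's --- must restrict to $b\ge 1$ (or otherwise handle $n-r=0$ in \eqref{eq:2.5}) and exclude $S\equiv 0$.
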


When it comes to the generalizations of Fibonacci numbers, we obtain two more corollaries.

\begin{corollary}\label{cor:5}
Let $\{A_{n}\}$ be an Lucas sequence or $(P,-Q)$-Fibonacci sequence, that is, $A_{0}=0$, $A_{1}=1$ $u=P$, and $v=-Q$. Then for any integers $a$ and $b$, $S(n)=A_{an+b}$ is an LP function with the prime $p$ if and only if
\begin{equation}\label{eq:cor5.1}
\begin{cases}
Qs(a-1,P,-Q)A_{1}^{2} & \equiv 0  \pmod p, \\
A_{b} & \equiv 1  \pmod p.
\end{cases}
\end{equation}
In particular, when $\{A_{n}\}$ are Pell numbers, $S(n)=A_{an+b}$ is an LP function with the prime $p$ if and only if
\begin{equation}\label{eq:cor5.2}
\begin{cases}
s(a-1,2,1) & \equiv 0  \pmod p, \\
A_{b} & \equiv 1  \pmod p.
\end{cases}
\end{equation}
\end{corollary}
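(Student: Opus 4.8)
The plan is to derive both parts of the corollary by direct specialization of Theorem~\ref{th:3}, so the entire argument reduces to substituting the defining parameters of a Lucas / $(P,-Q)$-Fibonacci sequence into the criterion~\eqref{eq:th3} and simplifying. No new machinery is required; the work is purely computational.

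First I would record the defining data $A_{0}=0$, $A_{1}=1$, $u=P$, $v=-Q$ and evaluate the quadratic factor appearing in the first line of~\eqref{eq:th3}, namely $vA_{0}^{2}+uA_{0}A_{1}-A_{1}^{2}$. Since $A_{0}=0$, the two terms containing $A_{0}$ vanish and this factor collapses to $-A_{1}^{2}$. Substituting $v=-Q$ as well, the first condition of~\eqref{eq:th3} reads
\begin{equation*}
(-Q)\,s(a-1,P,-Q)\,(-A_{1}^{2})\equiv 0\pmod p,
\end{equation*}
which is $Q\,s(a-1,P,-Q)\,A_{1}^{2}\equiv 0\pmod p$; the overall sign is irrelevant for a congruence to zero. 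The second line $A_{b}\equiv 1\pmod p$ carries over verbatim. This is precisely~\eqref{eq:cor5.1}.

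For the Pell specialization I would note that the Pell numbers satisfy $A_{n}=2A_{n-1}+A_{n-2}$, so $u=P=2$ and $v=-Q=1$, giving $Q=-1$; together with $A_{1}=1$ the expression $Q\,s(a-1,2,1)\,A_{1}^{2}$ becomes $-s(a-1,2,1)$. Hence the first condition of~\eqref{eq:cor5.1} is equivalent to $s(a-1,2,1)\equiv 0\pmod p$, which yields~\eqref{eq:cor5.2}.

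I expect no genuine obstacle: the only points demanding care are the sign bookkeeping in the quadratic factor and the decision to retain the factor $A_{1}^{2}$ (harmless since $A_{1}=1$) so that the statement matches the general Lucas-sequence normalization used in Theorem~\ref{th:3}. Everything else is an immediate consequence of that theorem.
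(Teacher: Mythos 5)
Your proposal is correct and is exactly the intended argument: the paper gives no separate proof of Corollary~\ref{cor:5}, treating it as an immediate specialization of Theorem~\ref{th:3} with $A_{0}=0$, $A_{1}=1$, $u=P$, $v=-Q$ (and $P=2$, $Q=-1$ for Pell numbers), which is precisely the substitution and sign bookkeeping you carry out.
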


Another famous generalization of Fibonacci numbers is Fibonacci word, which is in the case of $u=v=1$. Similarly, we have
\begin{corollary}\label{cor:6}
Let $\{A_{n}\}$ be Fibonacci words. Then for any integers $a$ and $b$, $S(n)=A_{an+b}$ is an LP function with the prime $p$ if and only if
\begin{equation}\label{eq:cor6}
\begin{cases}
F_{a}(A_{0}^{2}+A_{0}A_{1}-A_{1}^{2}) & \equiv 0  \pmod p, \\
A_{b} & \equiv 1  \pmod p.
\end{cases}
\end{equation}
where $F_{a}$ is the $a$th Fibonacci number.
\end{corollary}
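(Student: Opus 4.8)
The plan is to obtain Corollary~\ref{cor:6} by directly specializing Theorem~\ref{th:3} to the parameters $u=v=1$ that govern the Fibonacci word recurrence, and then simplifying the factor $v\,s(a-1,u,v)$ appearing in \eqref{eq:th3}. Substituting $u=v=1$ into the first congruence of \eqref{eq:th3} immediately makes the leading factor $v$ equal to $1$ and collapses the quadratic factor $vA_0^2+uA_0A_1-A_1^2$ to $A_0^2+A_0A_1-A_1^2$, which is exactly the parenthetical term in \eqref{eq:cor6}; the second congruence $A_b\equiv 1\pmod p$ is carried over verbatim. Hence the whole corollary reduces to the single identity $s(a-1,1,1)=F_a$.

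First I would compute $s(k,1,1)$ from the definition. With $u=v=1$ the powers disappear and
\[
s(k,1,1)=\sum_{i=0}^{\lfloor k/2\rfloor}\binom{k-i}{i},
\]
the classical sum counting tilings of a $1\times k$ board by unit squares and dominoes. I would establish $s(k,1,1)=F_{k+1}$ by induction on $k$: the base cases $s(0,1,1)=F_1$ and $s(1,1,1)=F_2$ are immediate, and applying Pascal's rule $\binom{k-i}{i}=\binom{k-1-i}{i}+\binom{k-1-i}{i-1}$ and reindexing the second resulting sum by $j=i-1$ yields the Fibonacci recurrence $s(k,1,1)=s(k-1,1,1)+s(k-2,1,1)$. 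Setting $k=a-1$ then gives $s(a-1,1,1)=F_a$, the $a$th Fibonacci number.

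Combining the two simplifications converts the first line of \eqref{eq:th3} into $F_a(A_0^2+A_0A_1-A_1^2)\equiv 0\pmod p$, which together with $A_b\equiv 1\pmod p$ is exactly \eqref{eq:cor6}. Because Theorem~\ref{th:3} is an equivalence, the specialized conditions remain both necessary and sufficient, so the corollary follows at once.

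I do not expect a real obstacle here: the only nontrivial ingredient is the Fibonacci identity $s(k,1,1)=F_{k+1}$ (a special case of the general fact that $s(k,u,v)$ coincides with the shifted fundamental recurrence $U_{k+1}$, where $U_0=0$, $U_1=1$, $U_k=uU_{k-1}+vU_{k-2}$), and its inductive proof is routine. The only point requiring slight care is the bookkeeping of the boundary terms when shifting the summation index—in particular that the $i=0$ term of the reindexed sum vanishes and that the upper limits align correctly with $\lfloor (k-1)/2\rfloor$ and $\lfloor k/2\rfloor-1$—but these are standard binomial manipulations.
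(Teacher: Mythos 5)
Your proposal is correct and matches the paper's (implicit) derivation exactly: the paper presents Corollary~\ref{cor:6} as the immediate specialization of Theorem~\ref{th:3} to $u=v=1$, and your identification $s(a-1,1,1)=F_a$ via the tiling sum $\sum_{i}\binom{k-i}{i}=F_{k+1}$ is precisely the simplification needed to pass from \eqref{eq:th3} to \eqref{eq:cor6}.
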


\section{Preliminaries}

For a fixed prime $p$, the following two corollaries from McIntosh \cite{McIntosh1992} will be needed.

\begin{lemma}\label{lem:1}
Let $S(n)$ be an LP function with the prime $p$, which is not identically zero. Then $S(0) \equiv 1 \pmod p$.
\end{lemma}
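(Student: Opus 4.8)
The plan is to exploit the fact that the defining congruence of an LP function with the prime $p$ (Definition \ref{def:2}) constrains only the digits through $0\le n_i\le p-1$ and imposes no lower bound on the number of digits, so a single integer may be substituted into the congruence through two different base-$p$ expansions that differ by a leading zero. Comparing the two resulting congruences will isolate the factor $S(0)$ and force it to be a multiplicative identity modulo $p$.

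Concretely, since $S$ is not identically zero modulo $p$, I would first fix an integer $m$ with $S(m)\not\equiv 0\pmod p$ and write its base-$p$ expansion $m=\sum_{i=0}^{r}m_ip^{i}$. Applying the hypothesis to $m$ gives $S(m)\equiv\prod_{i=0}^{r}S(m_i)\pmod p$. Next I would regard the same integer as $m=\sum_{i=0}^{r+1}m_ip^{i}$ with the extra digit $m_{r+1}=0$, which is a legitimate expansion because the definition requires only $0\le n_i\le p-1$; the hypothesis now yields $S(m)\equiv S(0)\prod_{i=0}^{r}S(m_i)\pmod p$. Setting the two right-hand sides equal produces the congruence $\prod_{i=0}^{r}S(m_i)\equiv S(0)\prod_{i=0}^{r}S(m_i)\pmod p$.

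The final step is a cancellation: since $\prod_{i=0}^{r}S(m_i)\equiv S(m)\not\equiv 0\pmod p$, the common factor is a unit modulo $p$ and may be removed, leaving $S(0)\equiv 1\pmod p$. The one point that must be handled with care — and the only place the hypothesis that $S$ is not identically zero is used — is guaranteeing the existence of the nonzero factor needed for this cancellation: absent any $m$ with $S(m)\not\equiv 0$, both congruences degenerate to the trivial $0\equiv 0$ and no information about $S(0)$ survives. It is also worth emphasizing where the leverage truly comes from: the Lucas property is a one-sided identity expressing $S(m)$ in terms of its digits, so feeding in a single integer whose expansion happens to contain a zero digit merely re-expresses $S(m)$ and produces no second equation to cancel against; the entire argument hinges on comparing the padded and unpadded expansions of \emph{the same} value.
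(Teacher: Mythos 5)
Your argument is correct: padding the base-$p$ expansion of an $m$ with $S(m)\not\equiv 0\pmod p$ by a leading zero digit yields $S(m)\equiv S(0)S(m)\pmod p$, and cancelling the unit $S(m)$ gives $S(0)\equiv 1\pmod p$; you also correctly isolate the two points that need care, namely that Definition~\ref{def:2} permits expansions with leading zeros and that the non-vanishing hypothesis (read modulo $p$) supplies the unit needed for cancellation. The paper itself offers no proof of this lemma --- it is quoted from McIntosh~\cite{McIntosh1992} --- and your padding-and-cancellation argument is exactly the standard one behind that citation, so there is nothing to fault.
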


\begin{lemma}\label{lem:2}
$S(n)$ is an LP function with the prime $p$, and $S(n)$ is periodic modulo $p$ if and only if $S(n) \equiv S(1)^{n} \pmod p$.
\end{lemma}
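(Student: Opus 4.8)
The plan is to read the statement under the standing hypothesis that $S$ is an LP function with the prime $p$, so that what must be shown is the equivalence between ``$S$ is periodic modulo $p$'' and ``$S(n)\equiv S(1)^{n}\pmod p$''; throughout I write $c:=S(1)$. The reverse implication I would dispose of quickly: if $S(n)\equiv c^{n}$ with $c\not\equiv 0\pmod p$, then Fermat's little theorem gives $c^{p-1}\equiv 1$, so $c^{n}$ is purely periodic with period equal to the multiplicative order of $c$, a divisor of $p-1$. The only delicate point is the degenerate value $c\equiv 0$, where $S(n)\equiv 0^{n}$ is still an LP function but fails to be purely periodic; I would note that this is excluded once $S$ is assumed both genuinely periodic and not identically zero. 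The substance therefore lies in the forward direction.

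First I would discard the trivial case $S\equiv 0\pmod p$, after which Lemma \ref{lem:1} yields $S(0)\equiv 1\pmod p$. Next I would record the \emph{local shift identity} coming directly from the LP property: for any $M$ and any $0\le n<p^{M}$, the base-$p$ expansion of $n+p^{M}$ is that of $n$ with an extra digit $1$ in position $M$, whence
\begin{equation*}
S(n+p^{M})\equiv S(1)\,S(n)\equiv c\,S(n)\pmod p\qquad(0\le n<p^{M}).
\end{equation*}

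The hard part will be to upgrade this identity, valid a priori only on the window $0\le n<p^{M}$, to a \emph{global} identity valid for all $n$; this is where periodicity must be brought in, and I expect it to be the main obstacle. Let $T$ be a period of $S$ modulo $p$ and fix $M$ with $p^{M}\ge T$. Both sequences $n\mapsto S(n+p^{M})$ and $n\mapsto c\,S(n)$ have period $T$ modulo $p$, and they agree on the interval $[0,p^{M})$, which contains a full period's worth of consecutive terms; since two period-$T$ sequences that agree on $T$ consecutive terms coincide, I would conclude
\begin{equation*}
S(n+p^{M})\equiv c\,S(n)\pmod p\qquad\text{for all }n\ge 0.
\end{equation*}
This converts the digit-combinatorial identity into a clean functional equation and is the crux of the argument.

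Three consequences would then finish the proof. First, $c\not\equiv 0\pmod p$, since otherwise the displayed identity forces $S(n)\equiv 0$ for all $n\ge p^{M}$, hence everywhere by periodicity, contradicting $S\not\equiv 0$. Second, iterating the identity gives $S(k\,p^{M})\equiv c^{k}S(0)\equiv c^{k}$ for all $k\ge 0$; taking a single digit $0\le a\le p-1$ and using that $a\,p^{M}$ has the lone nonzero digit $a$ in position $M$ (so $S(a\,p^{M})\equiv S(a)$ by the LP property) yields $S(a)\equiv c^{a}\pmod p$ for every base-$p$ digit $a$. Third, for arbitrary $n=\sum_{i}n_{i}p^{i}$ the LP property together with this digit evaluation gives $S(n)\equiv\prod_{i}S(n_{i})\equiv\prod_{i}c^{n_{i}}=c^{\,s_{p}(n)}$, where $s_{p}(n)=\sum_{i}n_{i}$ is the base-$p$ digit sum. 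Finally, since $p^{i}\equiv 1\pmod{p-1}$ we have $s_{p}(n)\equiv n\pmod{p-1}$, and $c^{p-1}\equiv 1$ then gives $c^{\,s_{p}(n)}\equiv c^{n}$, so that $S(n)\equiv S(1)^{n}\pmod p$, as required.
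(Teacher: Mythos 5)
Your proof is correct, but note that the paper itself offers no argument for this lemma at all: it is simply quoted as a ``corollary from McIntosh'' and used as a black box. So the comparison here is between your self-contained proof and a citation. Your argument is sound and, as far as I can tell, close in spirit to what McIntosh's result requires: the reverse direction via Fermat's little theorem, and the forward direction via the local shift identity $S(n+p^{M})\equiv S(1)S(n)$ on $[0,p^{M})$ (which correctly uses Lemma \ref{lem:1} to absorb the padding digits $S(0)$), upgraded to a global identity by choosing $p^{M}$ at least one full period $T$ and matching two period-$T$ sequences on $T$ consecutive terms. That globalization step is exactly where periodicity enters and is the right crux; the subsequent deductions ($S(1)\not\equiv 0$, $S(a)\equiv S(1)^{a}$ for digits $a$, and $S(n)\equiv S(1)^{s_{p}(n)}\equiv S(1)^{n}$ via $s_{p}(n)\equiv n\pmod{p-1}$) are all correct. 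One small point worth flagging: you ``discard'' the identically-zero case in the forward direction, but as stated the lemma is actually false there ($S\equiv 0$ is LP and periodic, yet $S(0)\equiv 0\not\equiv S(1)^{0}$); this is a defect of the lemma's statement, which omits the ``not identically zero'' hypothesis that appears in Lemma \ref{lem:1}, and you handle the mirror-image degeneracy in the reverse direction explicitly. Overall your write-up supplies a proof the paper leaves to the literature, which is a genuine added value.
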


Meanwhile, we can get the following lemma by induction on $n$.

\begin{lemma}\label{lem:3}
Let $n$ be a positive integer. Then

(1)
\begin{equation}\label{eq:2.1}
 F_{n} \equiv n3^{n-1} \pmod 5.
\end{equation}

(2)
\begin{equation}\label{eq:2.2}
 L_{n} \equiv 3^{n-1} \pmod 5.
\end{equation}
\end{lemma}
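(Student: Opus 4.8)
The plan is to prove both congruences by induction on $n$, observing that each proposed closed form satisfies the same recurrence $x_n \equiv x_{n-1} + x_{n-2} \pmod 5$ governing $F_n$ and $L_n$, and then matching two consecutive initial values. The engine behind everything is the single arithmetic fact $3^2 = 9 \equiv 4 \equiv 3 + 1 \pmod 5$; equivalently, the characteristic polynomial satisfies $x^2 - x - 1 \equiv (x-3)^2 \pmod 5$, so $3$ is a repeated root modulo $5$. This repeated root is precisely why a purely exponential form works for $L_n$ while $F_n$ acquires the extra linear factor $n$.

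For part (2), I would set $a_n = 3^{n-1}$ and verify the base cases $L_1 = 1 \equiv a_1$ and $L_2 = 3 \equiv a_2 \pmod 5$. For the inductive step, assuming the congruence for $n-1$ and $n-2$, I compute
\[
L_n = L_{n-1} + L_{n-2} \equiv 3^{n-2} + 3^{n-3} = 3^{n-3}(3 + 1) \equiv 3^{n-3}\cdot 3^2 = 3^{n-1} \pmod 5,
\]
where the middle congruence is exactly $3 + 1 \equiv 3^2 \pmod 5$, closing the induction.

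For part (1), I would set $b_n = n\,3^{n-1}$, check the two base cases $F_1 = 1 \equiv b_1$ and $F_2 = 1 \equiv 2\cdot 3 = b_2 \pmod 5$, and then for $n \geq 3$ reduce the inductive step to
\[
(n-1)3^{n-2} + (n-2)3^{n-3} = \big[3(n-1) + (n-2)\big]3^{n-3} = (4n - 5)3^{n-3} \equiv 4n\,3^{n-3} \pmod 5,
\]
while the target satisfies $n\,3^{n-1} = 9n\,3^{n-3} \equiv 4n\,3^{n-3} \pmod 5$; the two agree, and the induction goes through.

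I do not expect a genuine obstacle here: the entire content is the coincidence $9 \equiv 3 + 1 \pmod 5$. The only points deserving a little care are that the recurrence is second order, so each part needs two consecutive base cases rather than one, and that in part (1) the linear factor $n$ forces the inductive step to begin at $n \geq 3$; one should confirm that the identity $4n - 5 \equiv 9n \pmod 5$ holds for every $n$, so that no residue class of $n$ is inadvertently excluded.
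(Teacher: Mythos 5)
Your proof is correct and matches the paper's approach: the paper proves Lemma \ref{lem:3} simply ``by induction on $n$'' with no further detail, and your two-step induction with the base cases $n=1,2$ and the key fact $3+1\equiv 3^2\pmod 5$ is exactly the intended argument. The observation that $3$ is a repeated root of $x^2-x-1$ modulo $5$ is a nice conceptual gloss, but it does not change the method.
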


\begin{remarks*}
By using Lemma \ref{lem:2} and Lemma \ref{lem:3}, we can find some LP functions with the prime $5$,

(1) $S(n)=F_{5n+b}$ is an LP function with the prime $5$, where $b \equiv 1, 2, 8$ or $19 \pmod {20}$.

(2) $S(n)=L_{n+1}$ is an LP function with the prime $5$.
\end{remarks*}

In order to get the theorems, we need one more lemma.

\begin{lemma}\label{lem:4}
Let $n,r$ be two integers. Then

(1) (Catalan's identity)
\begin{equation}\label{eq:2.3}
 F_{n}^{2}-F_{n+r}F_{n-r}=(-1)^{n-r}\cdot F_{r}^{2}.
\end{equation}

(2)
\begin{equation}\label{eq:2.4}
 L_{n+r}L_{n-r}-L_{n}^{2}=(-1)^{n-r}\cdot 5F_{r}^{2}.
\end{equation}

(3)
\begin{equation}\label{eq:2.5}
 A_{n+r}A_{n-r}-A_{n}^{2}=(-v)^{n-r}s^{2}(r-1,u,v)(vA_{0}^{2}+uA_{0}A_{1}-A_{1}^{2}).
\end{equation}

\end{lemma}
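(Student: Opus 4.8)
The plan is to treat all three identities uniformly through Binet's formula, deriving (3) in full generality and then reading off (1) and (2) as specializations. Let $\alpha,\beta$ denote the roots of the characteristic polynomial $x^{2}-ux-v$, so that $\alpha+\beta=u$, $\alpha\beta=-v$, and $(\alpha-\beta)^{2}=u^{2}+4v$. Assuming for the moment that the roots are distinct, I would write $A_{n}=C\alpha^{n}+D\beta^{n}$, where the constants are fixed by $C+D=A_{0}$ and $C\alpha+D\beta=A_{1}$; this representation is valid for all integers $n$.

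First I would expand $A_{n+r}A_{n-r}-A_{n}^{2}$. The pure powers $C^{2}\alpha^{2n}$ and $D^{2}\beta^{2n}$ cancel, and collecting the cross terms gives
\[
A_{n+r}A_{n-r}-A_{n}^{2}=CD\bigl[(\alpha\beta)^{n-r}(\alpha^{2r}+\beta^{2r})-2(\alpha\beta)^{n}\bigr]=CD\,(\alpha\beta)^{n-r}(\alpha^{r}-\beta^{r})^{2}.
\]
Since $\alpha\beta=-v$, the prefactor $(\alpha\beta)^{n-r}$ is exactly $(-v)^{n-r}$, which already accounts for the sign and the power of $v$ appearing in (3).

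The two remaining ingredients are purely algebraic. I would show that $CD(\alpha-\beta)^{2}=vA_{0}^{2}+uA_{0}A_{1}-A_{1}^{2}$: expanding the right-hand side via $A_{0}=C+D$, $A_{1}=C\alpha+D\beta$ and reducing with $\alpha^{2}=u\alpha+v$, $\beta^{2}=u\beta+v$, the $C^{2}$ and $D^{2}$ contributions vanish and the $CD$ contribution equals $u^{2}+4v=(\alpha-\beta)^{2}$. Next I would invoke the classical closed form for the Lucas sequence of the first kind, namely $\tfrac{\alpha^{r}-\beta^{r}}{\alpha-\beta}=s(r-1,u,v)$, which is the sequence $U_{0}=0,\,U_{1}=1,\,U_{n}=uU_{n-1}+vU_{n-2}$; this formula follows by induction from Pascal's rule or from the generating function $\tfrac{x}{1-ux-vx^{2}}$. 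Squaring gives $(\alpha^{r}-\beta^{r})^{2}=s^{2}(r-1,u,v)(\alpha-\beta)^{2}$. Substituting both facts into the displayed expression yields
\[
A_{n+r}A_{n-r}-A_{n}^{2}=(-v)^{n-r}\,s^{2}(r-1,u,v)\,\bigl(vA_{0}^{2}+uA_{0}A_{1}-A_{1}^{2}\bigr),
\]
which is (3). The equal-root case $u^{2}+4v=0$ is not genuinely special: for fixed $n,r$ both sides are the same rational function of $u,v,A_{0},A_{1}$, so agreement on the Zariski-dense set of distinct-root parameters forces agreement everywhere.

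Finally I would specialize. Taking $u=v=1$ gives $s(r-1,1,1)=F_{r}$ and $(-v)^{n-r}=(-1)^{n-r}$. For the Fibonacci choice $A_{0}=0,\,A_{1}=1$ the constant factor is $vA_{0}^{2}+uA_{0}A_{1}-A_{1}^{2}=-1$, so (3) becomes $F_{n+r}F_{n-r}-F_{n}^{2}=-(-1)^{n-r}F_{r}^{2}$, which is (1); for the Lucas choice $A_{0}=2,\,A_{1}=1$ the factor is $4+2-1=5$, giving (2) directly. The main obstacle is essentially bookkeeping: carrying out the $CD(\alpha-\beta)^{2}$ reduction cleanly and justifying the combinatorial closed form for $s(r-1,u,v)$. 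Once those are in hand, the identity assembles immediately and the two classical cases drop out for free.
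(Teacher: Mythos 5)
Your argument is correct, but it takes a genuinely different route from the paper. The paper proves \eqref{eq:2.4} by writing $L_{n+r}$ and $L_{n}$ in terms of $L_{n},L_{n-1}$ and $L_{n-r},L_{n-r-1}$ with Fibonacci coefficients, expressing $L_{n+r}L_{n-r}-L_{n}^{2}$ as a $2\times 2$ determinant, and stepping the columns down by the recurrence until it reaches $(-1)^{n-r}F_{r}\left(2L_{r+1}-L_{r}\right)=(-1)^{n-r}\cdot 5F_{r}^{2}$; for \eqref{eq:2.5} it first proves the decomposition $A_{n+r}=s(k,u,v)A_{n+r-k}+t(k,u,v)A_{n+r-k-1}$ by a fairly long induction on $k$ (splitting into the parities of $m$ to verify Pascal-type recombination of the sums) and then repeats the determinant reduction. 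Your Binet-formula computation replaces all of that with the single identity $A_{n+r}A_{n-r}-A_{n}^{2}=CD\,(\alpha\beta)^{n-r}(\alpha^{r}-\beta^{r})^{2}$ together with the two evaluations $CD(\alpha-\beta)^{2}=vA_{0}^{2}+uA_{0}A_{1}-A_{1}^{2}$ and $(\alpha^{r}-\beta^{r})/(\alpha-\beta)=s(r-1,u,v)$, both of which you verify correctly, and it delivers \eqref{eq:2.3} and \eqref{eq:2.4} as immediate specializations rather than as separate arguments. What your approach buys is brevity and uniformity (one computation covers all three parts, and the mysterious factor $vA_{0}^{2}+uA_{0}A_{1}-A_{1}^{2}$ is explained as $CD(\alpha-\beta)^{2}$); what it costs is that you must separately justify the combinatorial closed form for $s(r-1,u,v)$ (which the paper's induction produces as a byproduct) and you must dispose of the repeated-root case $u^{2}+4v=0$, which your Zariski-density remark does adequately, though for negative $n-r$ one should note that after multiplying through by a power of $v$ both sides are genuine polynomials in $u,v,A_{0},A_{1}$. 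The paper's determinant method stays entirely in integer arithmetic and needs no such caveat.
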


\begin{proof}[Proof of \eqref{eq:2.4}]
We prove it by using the determinant of the matrix and the fact that
\begin{displaymath}
\left\{ \begin{array}{ll}
L_{n+r} & =F_{r+1}L_{n}+F_{r}L_{n-1},\\
L_{n} & =F_{r+1}L_{n-r}+F_{r}L_{n-r-1}.
\end{array} \right.
\end{displaymath}
Hence,
\begin{eqnarray*}
  L_{n+r}L_{n-r}-L_{n}^{2} &=& {\left|\begin{array}{cc}L_{n+r} & L_{n}\\ L_{n} & L_{n-r} \end{array}\right|} \\
   &=& {\left|\begin{array}{cc}F_{r+1}L_{n}+F_{r}L_{n-1} & L_{n}\\ F_{r+1}L_{n-r}+F_{r}L_{n-r-1} & L_{n-r} \end{array}\right|} \\
   &=& F_{r}{\left|\begin{array}{cc}L_{n-1} & L_{n}\\ L_{n-r-1} & L_{n-r} \end{array}\right|}\\
   &=& F_{r}{\left|\begin{array}{cc}L_{n-1} & L_{n-2}\\ L_{n-r-1} & L_{n-r-2} \end{array}\right|}\\
   &=& \cdots \\
   &=& (-1)^{n-r}F_{r}{\left|\begin{array}{cc}L_{r+1} & L_{r}\\ L_{1} & L_{0} \end{array}\right|}\\
   &=& (-1)^{n-r}F_{r}(2L_{r+1}-L_{r})\\
   &=& (-1)^{n-r}F_{r}(L_{r+1}+L_{r-1})\\
   &=& (-1)^{n-r}\cdot 5F_{r}^{2}.
\end{eqnarray*}
So, \eqref{eq:2.4} is true.
\end{proof}

\begin{proof}[Proof of \eqref{eq:2.5}]
To prove \eqref{eq:2.5}, we first prove that
\begin{equation}\label{eq:2.6}
A_{n+r}=s(k,u,v)A_{n+r-k}+t(k,u,v)A_{n+r-k-1},
\end{equation}
where $s(k,u,v)=\sum_{i=0}^{\lfloor \frac{k}{2} \rfloor}\binom{k-i}{i}u^{k-2i}v^{i}$ and $t(k,u,v)=\sum_{j=0}^{\lfloor \frac{k-1}{2} \rfloor}\binom{k-1-j}{j}u^{k-1-2j}v^{j+1}$. For k=1, \eqref{eq:2.6} holds. By inducting on $k$, we can obtain the result. Assume for $k=1,2,\ldots,m$, \eqref{eq:2.6} holds. For $k=m+1$,
\begin{align*}
&A_{n+r}=s(m,u,v)A_{n+r-m}+t(m,u,v)A_{n+r-m-1}\\
&= \sum_{i=0}^{\lfloor \frac{m}{2} \rfloor}\binom{m-i}{i}u^{m-2i}v^{i}A_{n+r-m}+\sum_{j=0}^{\lfloor \frac{m-1}{2} \rfloor}\binom{m-1-j}{j}u^{m-1-2j}v^{j+1}A_{n+r-m-1}\\
&= \sum_{i=0}^{\lfloor \frac{m}{2} \rfloor}\binom{m-i}{i}u^{m-2i}v^{i}\Big(uA_{n+r-m-1}+vA_{n+r-m-2}\Big)+\sum_{j=0}^{\lfloor \frac{m-1}{2} \rfloor}\binom{m-1-j}{j}u^{m-1-2j}v^{j+1}A_{n+r-m-1}\\
&= (\sum_{i=0}^{\lfloor \frac{m}{2} \rfloor}\binom{m-i}{i}u^{m+1-2i}v^{i}+\sum_{j=0}^{\lfloor \frac{m-1}{2} \rfloor}\binom{m-1-j}{j}u^{m-1-2j}v^{j+1})A_{n+r-m-1}\\&+\sum_{i=0}^{\lfloor \frac{m}{2} \rfloor}\binom{m-i}{i}u^{m-2i}v^{i+1}A_{n+r-m-2}\\
&= \Big(\sum_{i=0}^{\lfloor \frac{m}{2} \rfloor}\binom{m-i}{i}u^{m+1-2i}v^{i}+\sum_{j=0}^{\lfloor \frac{m-1}{2} \rfloor}\binom{m-1-j}{j}u^{m-1-2j}v^{j+1}\Big)A_{n+r-m-1}+t(m+1,u,v)A_{n+r-m-2}
\end{align*}.
\\
If $m \equiv 0 \pmod 2$,
\begin{align*}
&\sum_{i=0}^{\lfloor \frac{m}{2} \rfloor}\binom{m-i}{i}u^{m+1-2i}v^{i}+\sum_{j=0}^{\lfloor \frac{m-1}{2} \rfloor}\binom{m-1-j}{j}u^{m-1-2j}v^{j+1}\\
&= \sum_{i=0}^{\frac{m}{2}}\binom{m-i}{i}u^{m+1-2i}v^{i}+\sum_{j=0}^{\frac{m}{2}-1}\binom{m-1-j}{j}u^{m-1-2j}v^{j+1}\\
&= u^{m+1}+\sum_{i=1}^{\frac{m}{2}}(\binom{m-i}{i}+\binom{m-i}{i-1})u^{m+1-2i}v^{i}\\
&= u^{m+1}+\sum_{i=1}^{\frac{m}{2}}{\binom{m+1-i}{i}}u^{m+1-2i}v^{i}\\
&= s(m+1,u,v).
\end{align*}

If $m \equiv 1 \pmod 2$,
\begin{align*}
&\sum_{i=0}^{\lfloor \frac{m}{2} \rfloor}\binom{m-i}{i}u^{m+1-2i}v^{i}+\sum_{j=0}^{\lfloor \frac{m-1}{2} \rfloor}\binom{m-1-j}{j}u^{m-1-2j}v^{j+1}\\
&= \sum_{i=0}^{\frac{m-1}{2}}\binom{m-i}{i}u^{m+1-2i}v^{i}+\sum_{j=0}^{\frac{m-1}{2}}\binom{m-1-j}{j}u^{m-1-2j}v^{j+1}\\
&= v^{\frac{m+1}{2}}+\sum_{i=0}^{\frac{m-1}{2}}(\binom{m-i}{i}+\binom{m-i}{i-1})u^{m+1-2i}v^{i}\\
&= v^{\frac{m+1}{2}}+\sum_{i=0}^{\frac{m-1}{2}}{\binom{m+1-i}{i}}u^{m+1-2i}v^{i}\\
&= s(m+1,u,v).
\end{align*}

Hence, $A_{n+r}=s(m+1,u,v)A_{n+r-m-1}+t(m+1,u,v)A_{n+r-m-2}$, which means \eqref{eq:2.6} holds. The rest of the proof is similar to the proof of \eqref{eq:2.4}. By using the determinant of the matrix, we can obtain
\begin{equation*}
A_{n+r}A_{n-r}-A_{n}^{2}=(-1)^{n-r}v^{n-r-1}t(r,u,v)(A_{r+1}A_{0}-A_{r}A_{1}).
\end{equation*}
By using \eqref{eq:2.6} and the fact $t(r,u,v)=vs(r-1,u,v)$, we have
\begin{align*}
&A_{n+r}A_{n-r}-A_{n}^{2}=(-1)^{n-r}v^{n-r-1}t(r,u,v)(A_{r+1}A_{0}-A_{r}A_{1})\\
&= (-1)^{n-r}v^{n-r-1}t(r,u,v)(t(r,u,v)A_{0}^{2}-s(r-1,u,v)A_{1}^{2}+s(r,u,v)A_{1}A_{0}-t(r-1,u,v)A_{0}A_{1})\\
&= (-v)^{n-r}s(r-1,u,v)(vs(r-1,u,v)A_{0}^{2}-s(r-1,u,v)A_{1}^{2}+s(r,u,v)A_{1}A_{0}-vs(r-2,u,v)A_{0}A_{1})\\
&= (-v)^{n-r}s^{2}(r-1,u,v)(vA_{0}^{2}+uA_{0}A_{1}-A_{1}^{2}).
\end{align*}
So, \eqref{eq:2.5} is true.

\end{proof}

\section{Proofs of the theorems}

\begin{proof}[Proof of Theorem \ref{th:1}]
The Fibonacci numbers are periodic modulo $p$ for any prime $p$. So is $S(n)=F_{an+b}$, where $a,b$ are positive integers.

We first prove the necessity. Assume that $S(n)=F_{an+b}$ is an LP function with the prime $p$. From Lemma \ref{lem:1}, $S(0) \equiv 1 \pmod p$, so $F_{b} \equiv 1 \pmod p$. And from Lemma \ref{lem:2}, for any positive integer $n$, $F_{an+b} \equiv F_{a+b}^{n} \pmod p$. Set $n=2$, $F_{2a+b} \equiv F_{a+b}^{2} \pmod p$. By using Catalan's identity \eqref{eq:2.3}, we have
\begin{align*}
  F_{a+b+a}F_{a+b-a} &= F_{a+b}^{2}-(-1)^{a+b-a}F_{a}^{2}\\
  F_{2a+b}F_{b} &= F_{a+b}^{2}-(-1)^{b}F_{a}^{2}\\
  F_{a+b}^{2} &\equiv F_{a+b}^{2}-(-1)^{b}F_{a}^{2} \pmod p\\
  F_{a}^{2} &\equiv 0 \pmod p\\
  F_{a} &\equiv 0 \pmod p.
\end{align*}
Hence, $a$ and $b$ satisfy
\begin{displaymath}
\left\{ \begin{array}{ll}
F_{a} & \equiv 0 \pmod p\\
F_{b} & \equiv 1 \pmod p
\end{array} \right.
\end{displaymath}

Next we prove the sufficiency. From Lemma \ref{lem:2}, we have to prove that
\begin{equation}\label{eq:3.1}
S(n) \equiv S(1)^{n} \pmod p.
\end{equation}
And we'll prove it by induction on $n$. For $n=1$, it's obviously true. Assume that for $n\leq k$, \eqref{eq:3.1} holds. For $n=k+1$, by using Catalan's identity \eqref{eq:2.3}, we have
\begin{align*}
  F_{ak+b+a}F_{ak+b-a} &= F_{ak+b}^{2}-(-1)^{ak+b-a}F_{a}^{2}\\
  F_{a(k+1)+b}F_{a(k-1)+b} &= F_{ak+b}^{2}-(-1)^{a(k-1)+b}F_{a}^{2}\\
  F_{a(k+1)+b}F_{a+b}^{k-1} &\equiv F_{a+b}^{2k}-(-1)^{a(k-1)+b}F_{a}^{2} \pmod p\\
  F_{a(k+1)+b} &\equiv F_{a+b}^{k+1} \pmod p.
\end{align*}
Hence, \eqref{eq:3.1} holds for any positive integer $n$. And $F_{an+b}$ is an LP function with the prime $p$.
\end{proof}

\begin{proof}[Proof of Theorem \ref{th:2}]

The proof is similar to Theorem \ref{th:1}. Lucas number is periodic modulo $p$ for any prime $p$. So is $S(n)=L_{an+b}$, where $a,b$ are positive integers.
We first prove the necessity. Assume that $S(n)=L_{an+b}$ is an LP function with the prime $p$. From Lemma \ref{lem:1}, $S(0) \equiv 1 \pmod p$, so $L_{b} \equiv 1 \pmod p$. And from Lemma \ref{lem:2}, for any positive integer $n$, $L_{an+b} \equiv L_{a+b}^{n} \pmod p$. Set $n=2$, $L_{2a+b} \equiv L_{a+b}^{2} \pmod p$. By using \eqref{eq:2.4}, we have
\begin{align*}
  L_{a+b+a}L_{a+b-a} &= L_{a+b}^{2}+(-1)^{a+b-a}\cdot 5F_{a}^{2}\\
  L_{2a+b}L_{b} &= L_{a+b}^{2}+(-1)^{b}\cdot 5F_{a}^{2}\\
  L_{a+b}^{2} &\equiv L_{a+b}^{2}+(-1)^{b}\cdot 5F_{a}^{2} \pmod p\\
  5F_{a}^{2} &\equiv 0 \pmod p\\
  5F_{a} &\equiv 0 \pmod p.
\end{align*}
Hence, $a$ and $b$ satisfy
\begin{displaymath}
\left\{ \begin{array}{ll}
5F_{a} & \equiv 0 \pmod p\\
L_{b} & \equiv 1 \pmod p
\end{array} \right.
\end{displaymath}

Next we prove the sufficiency. From Lemma \ref{lem:2}, we also have to prove that \eqref{eq:3.1} is true. And we'll prove it by induction on $n$. For $n=1$, it's obviously true. Assume that for $n\leq k$, \eqref{eq:3.1} holds. For $n=k+1$, by using \eqref{eq:2.4}, we have
\begin{align*}
  L_{ak+b+a}L_{ak+b-a} &= L_{ak+b}^{2}+(-1)^{ak+b-a}\cdot 5F_{a}^{2}\\
  L_{a(k+1)+b}L_{a(k-1)+b} &= L_{ak+b}^{2}+(-1)^{a(k-1)+b}\cdot 5F_{a}^{2}\\
  L_{a(k+1)+b}L_{a+b}^{k-1} &\equiv L_{a+b}^{2k}+(-1)^{a(k-1)+b}\cdot 5F_{a}^{2} \pmod p\\
  L_{a(k+1)+b} &\equiv L_{a+b}^{k+1} \pmod p.
\end{align*}
Hence, \eqref{eq:3.1} holds for any positive integer $n$. And $L_{an+b}$ is an LP function with the prime $p$.
\end{proof}

\begin{proof}[Proof of Theorem \ref{th:3}]

From \cite{Carmichael1920} and \cite{Robinson1966}, we know that for any integer $m$, a linear recurrent sequence of integers modulo $m$ is periodic. The same is true for a prime $p$. Hence $S(n)=A_{an+b}$ is periodic modulo $p$. To obtain the proof it is enough to apply the reasoning just like in the proofs of Theorem \ref{th:1} and Theorem \ref{th:2}.
\end{proof}

\subsection*{Acknowledgments}
This work is supported by the National Natural Science Foundation of China (Grant No. 11501052 and Grant No. 11571303).

\bibliographystyle{amsplain}

\begin{thebibliography}{9}

\bibitem{Cai2014}
T. Cai, \textit{The Book of Numbers}, World Scientifc Publishing Co., 2016.

\bibitem{Carlitz1955}
L. Carlitz, The coefficients of the reciprocal of $J_{0}(x)$, \textit{Arch. Mat.} \textbf{6} (1955), no. 6, 121--127.

\bibitem{Carmichael1920}
R. D. Carmichael, On sequences of integers defined by recurrence relations, \textit{Quart. Jour. Math.} \textbf{48} (1920), 343--372.

\bibitem{Gessel1982}
I. Gessel, Some congruences for Ap\'{e}ry numbers, \textit{J. Number Theory} \textbf{14} (1982), no. 3, 362--368.

\bibitem{McIntosh1992}
R. McIntosh, A generalization of a congruential property of Lucas, \textit{Amer. Math. Monthly} \textbf{99} (1992), no. 3, 231--238.

\bibitem{Robinson1966}
D. W. Robinson, A Note on Linear Recurrent Sequences Modulo m, \textit{Amer. Math. Monthly} \textbf{73} (1966), no. 6, 619--621.

\end{thebibliography}

\end{document}